\newtheorem{thm}{Theorem}
\newtheorem{proposition}[thm]{Proposition}
\newtheorem{lemma}[thm]{Lemma}
\newtheorem{assumption}{Assumption}
\title{A Sufficient Condition for Partial Ensemble Controllability of Bilinear 
Schr\"{o}dinger Equations with Bounded Coupling Terms}
\author{
\authorblockN{Thomas Chambrion}
\authorblockA{Universit\'e de Lorraine, Institut \'Elie Cartan de Lorraine, UMR 7502,  
Vand{\oe}uvre-­l\`es-­Nancy,F-­54506, France  \\
CNRS, Institut \'Elie Cartan de Lorraine, UMR 7502, Vand{\oe}uvre-­l\`es-­Nancy, F-­54506,
France\\
Inria, CORIDA, Villers-l\`es-Nancy, F-54600, France\\
{\tt\small Thomas.Chambrion@univ-lorraine.fr}}
}
\begin{document}

\maketitle
\thispagestyle{empty}
\pagestyle{empty}

\begin{abstract}
This note presents a sufficient condition for partial approximate ensemble controllability  
of a set of bilinear conservative systems in an infinite dimensional Hilbert space. The 
proof relies on classical geometric and averaging control techniques applied on finite 
dimensional approximation of the infinite dimensional system. The results are illustrated 
with the planar rotation  of a linear molecule. 
\end{abstract}

\section{INTRODUCTION}

\subsection{Control of quantum systems}
The state of a quantum system evolving in a Riemannian manifold $\Omega$ is
described by 
its \emph{wave function}, a point $\psi$ in $L^2(\Omega, \mathbf{C})$. When the
system is 
submitted to an electric field (e.g., a laser), the time evolution of the wave
function is 
given, under the dipolar approximation and neglecting decoherence,  by the
Schr\"{o}dinger 
bilinear 
equation:
\begin{equation}\label{EQ_bilinear}
\mathrm{i} \frac{\partial \psi}{\partial t}=(-\Delta  +V(x)) \psi(x,t) +u(t) 
W(x) 
\psi(x,t)
\end{equation}
where $\Delta$ is the Laplace-Beltrami operator on $\Omega$,  $V$ and $W$ are
real 
potential accounting for the properties of the free system and the control
field 
respectively, while 
the real function of the time $u$ accounts for the intensity of the laser. 

In view of applications (for instance in NMR), it is important to know whether and how
it is possible to chose a suitable control $u:[0,T]\to \mathbf{R}$ in order to steer 
(\ref{EQ_bilinear}) from a given initial state to a given target. This question has raised 
considerable interest in the community in the last decade. After the negative results of 
\cite{bms} and \cite{Turinici} excluding 
exact controllability on the natural domain of the operator $-\Delta +V$ when
$W$ is bounded, the first, and 
at this day the only one, description of the attainable set for an example of
bilinear quantum system 
was obtained by Beauchard (\cite{beauchard,beauchard-coron}).  Further investigations of
the approximate controllability of (\ref{EQ_bilinear}) were conducted using
Lyapunov techniques  
(\cite{nersesyan, Nersy, beauchard-nersesyan,Mirrahimi, MR2168664, mirra-solo}) 
and geometric techniques (\cite{Schrod,Schrod2}).
\subsection{Ensemble controllability}

In many applications, a macroscopic device acts on a large number of identical microscopic 
quantum systems (for instance, a single laser acts on a small quantity of liquid 
containing many molecules). Usually, the external field acts differently on each of the 
small systems (depending for instance on the orientation of the molecule with respect to 
the exterior electric field). For the sake of simplicity, we will assume in this work that 
the action of the external field on the system $a$ is proportional to $a$.   Instead of 
one system of type (\ref{EQ_bilinear}), one has to control a continuum:
\begin{equation}\label{EQ_bilinear_continuum}
\mathrm{i} \frac{\partial \psi_\alpha}{\partial t}=(-\Delta  +V(x) +u(t) 
\alpha W(x)) \psi_\alpha(x,t) 
\end{equation}
where the system labeled with $\alpha$, $\alpha\in [0,1]$, has wave function $\psi_\alpha$. Notice that, 
since the systems are physically identical, the free dynamics (when $u=0$) is the same for 
every $a$.

The  simultaneous (or \emph{ensemble}) control problem turns into the following question: 
let a continuum of 
initial conditions $(\psi^0_a)_{a \in [0,1]}$ and of targets $(\psi^t_a)_{a \in [0,1]}$ be 
given. Does it exist a control $u$ that steers the systems (\ref{EQ_bilinear_continuum}) 
from $\psi^0_a$ to $\psi^t_a$ for every $a$ in $[0,1]$? 

Because of its crucial importance for applications (dispersion of parameters is 
\emph{always} present in real world systems), the problem of \emph{ensemble controllability} 
of quantum systems has been tackled by many authors, see for instance   
\cite{PhysRevA.73.030302, MR2879412,MR2608124,4177466,MR2191545,SarletteEnsemble} for 
theoretical results and \cite{6425988} for numerical aspects.

\subsection{Framework and notations}

To take advantage of the powerful tools of linear operators, we will reformulate the 
problem (\ref{EQ_bilinear_continuum}) in the following abstract setting. Let $H$ be a 
separable Hilbert space, endowed with the $\langle \cdot,\cdot 
\rangle$ Hilbert product. We consider the continuum of control systems
\begin{equation}\label{EQ_main}
\frac{\mathrm{d}\psi}{\mathrm{d}t}=A\psi +u(t) \alpha B\psi, \quad \alpha \in [0,1],
\end{equation}
where the linear 
operators $A,B$ can be completed in a 5- or 6-uple that satisfies Assumption \ref{ASS_AB} 
or Assumption \ref{ASS_AB_spectre_discret}.
\begin{assumption}\label{ASS_AB}
The 6-uple $(A,B,\lambda_1,\phi_1,\lambda_2,\phi_2)$  satisfies
\begin{enumerate}
\item $A$ is skew-adjoint with domain $D(A)$;\label{ASS_A_skew_adjoint}
\item $B$ is bounded and skew-symmetric; \label{B_bounded}
\item $\phi_1$ and $\phi_2$ are two eigenvectors of $A$ of norm 1, associated with  eigenvalues $-\mathrm{i}\lambda_1$ and $-\mathrm{i}\lambda_2$;
\label{ASS_A_eigenvectors}
\item $\lambda_1 <\lambda_2$ and $\langle \phi_1,B\phi_2\rangle \neq 0$;
\item for every eigenvalues $-\mathrm{i}\mu$ and $-\mathrm{i}\mu'$ of $A$, associated 
with eigenvectors $v$ and $v'$, $|\lambda_1-\lambda_2|=|\mu-\mu'|$ implies 
$\{\lambda_1,\lambda_2\}=\{\mu,\mu'\}$ or $\{\lambda_1,\lambda_2\}\cap \{\mu,\mu'\}=\emptyset$ 
or $\langle v,B v'\rangle=0$;\label{ASS_non_degenerate}
\item  the essential spectrum of $\mathrm{i}A$ (if any) does not accumulate in any of 
these four points: $ 2\lambda_1-\lambda_2, \lambda_1, \lambda_2$ and 
$2\lambda_2-\lambda_1$.\label{ASS_sspectre_essentiel}
\end{enumerate}
\end{assumption}

\begin{assumption}\label{ASS_AB_spectre_discret}
The 5-uple $(A,B,U,\Lambda,\Phi)$  satisfies
\begin{enumerate}
\item $A$ is skew-adjoint with domain $D(A)$;\label{ASS_A_skew_adjoint_discret}
\item $B$ is skew-symmetric; \label{B_skew_symmetric}
\item $U$ is a subset of $\mathbf{R}$ containing at least $0$ and the points 
$\{1/n, n\in \mathbf{N}\}$;\label{ASS_U_accumul}
\item for every $u$ in $U$, $A+uB$ is skew-adjoint (with domain not necessarily equal 
to $D(A)$);\label{ASS_AuB_skew_adjoint}
\item $\Phi=(\phi_j)_{j\in \mathbf{N}}$ is a Hilbert basis of $H$ made of  eigenvectors 
of $A$, all of which in the domain of $B$;
\label{ASS_Phi_eigenvectors}
\item $\Lambda=(\lambda_j)_{j\in \mathbf{N}}$ is a sequence of real numbers such that, 
for every $j$ in $\mathbf{N}$, $A\phi_j=-\mathrm{i}\lambda_j$; \label{ASS_Lambda_eigenvalues}
\item $\lambda_1 <\lambda_2$ and $\langle \phi_1,B\phi_2\rangle \neq 0$;\label{ASS_couplage_spectre_discret}
\item for all eigenvalues $-\mathrm{i}\mu$ and $-\mathrm{i}\mu'$ of $A$, associated 
with eigenvectors $v$ and $v'$, $|\lambda_1-\lambda_2|=|\mu-\mu'|$ implies 
$\{\lambda_1,\lambda_2\}=\{\mu,\mu'\}$ or $\{\lambda_1,\lambda_2\}\cap \{\mu,\mu'\}=\emptyset$ 
or $\langle v,B v'\rangle=0$.\label{ASS_non_degenerate_discret}
\end{enumerate}
\end{assumption}
If $(A,B)$ satisfies Assumptions \ref{ASS_AB}.\ref{ASS_A_skew_adjoint} and 
\ref{ASS_AB}.\ref{B_bounded} (resp. $(A,B,U)$ satisfies Assumptions 
\ref{ASS_AB_spectre_discret}.\ref{ASS_A_skew_adjoint_discret}, 
\ref{ASS_AB_spectre_discret}.\ref{ASS_U_accumul} and 
\ref{ASS_AB_spectre_discret}.\ref{ASS_AuB_skew_adjoint}), then for every $t_0,t$ 
in $\mathbf{R}$,  for 
every $u$ in $L^1(\mathbf{R},\mathbf{R})$ (resp. $u:\mathbf{R}\to U$ piecewise constant),  
there exists a unique family of unitary operators 
$(\Upsilon^{u,\alpha}_{t,t_0})_{\alpha \in [0,1]}$ such that, for every family 
$(\psi^0_\alpha)_{\alpha \in [0,1]}$ in $H$, for every $\alpha$ in $[0,1]$, $t\mapsto 
\Upsilon^{u,\alpha}_{t,t_0}\psi^0_\alpha$ is the unique solution of (\ref{EQ_main}) 
in the weak sense that 
satisfies $\Upsilon^{u,\alpha}_{t_0,t_0}\psi^0_\alpha=\psi^0_\alpha$.

Let $(A,B,\lambda_1,\phi_1,\lambda_2,\phi_2)$   satisfy Assumption \ref{ASS_AB} or 
$(A,B,U,\Lambda,\phi)$ satisfy Assumption \ref{ASS_AB_spectre_discret}. We 
define the 2-dimensional Hilbert space ${\mathcal L}_2=\mathrm{span}(\phi_1,\phi_2)$, 
$\pi_2:H\to H$ the orthogonal 
projection on ${\mathcal L}_2$, $A^{(2)}=\pi_2A\pi_2$ and $B^{(2)}=\pi_2A\pi_2$ the 
compressions of $A$ and $B$ on ${\mathcal L}_2$, and $X_{(2)}^{u,\alpha}$, the propagator 
associated with the (infinite dimensional) system $x'=A^{(2)}x+u\alpha B^{(2)}x$. By 
abuse, we will still denote with $X_{(2)}^{u,\alpha}$ the restriction of 
$X_{(2)}^{u,\alpha}$ to ${\mathcal L}_2$.

\subsection{Main result}

\begin{proposition} \label{PRO_main_result}
Let $(A,B,\lambda_1,\phi_1,\lambda_2,\phi_2)$  satisfy Assumption \ref{ASS_AB}  (resp.
$(A,B,U,\Lambda,\Phi)$ satisfy Assumption \ref{ASS_AB_spectre_discret}) and let
$\hat{\Upsilon}:\alpha \in [0,1]\mapsto  \hat{\Upsilon}^\alpha \in U({\mathcal L}_2)$ be 
a continuous curve of unitary operators on ${\mathcal L}_2$ that satisfies 
$\hat{\Upsilon}^0=\mathrm{Id}_{{\mathcal L}_2}$. Then, for every $\varepsilon >0$, for 
every $\delta>0$, there exists $u:[0,T]\to [-\delta,\delta]$ (resp. piecewise constant 
with value in $U \cap [0,\delta]$) such that 
for every $\alpha$ in $[0,1]$ and $(j,k)$ in $\{1,2\}\times \{1,2\}$,
$\left ||\langle \phi_j, \Upsilon^{u,\alpha}_{T,0}\phi_k \rangle|-|\langle \phi_j, 
\hat{\Upsilon}^\alpha \phi_k \rangle| \right |<\varepsilon$.
\end{proposition}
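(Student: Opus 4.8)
\emph{Proof strategy.} The plan is to combine, as announced, a finite-dimensional reduction, an averaging argument and a geometric (Lie-theoretic) argument; I describe the reasoning under Assumption~\ref{ASS_AB} and indicate the changes for Assumption~\ref{ASS_AB_spectre_discret} at the end. First I would pass to the interaction picture, writing $\psi(t)=e^{tA}y(t)$, so that $\dot y=u(t)\,\alpha\,\tilde B(t)\,y$ with $\tilde B(t)=e^{-tA}Be^{tA}$ and $\langle\phi_j,\tilde B(t)\phi_k\rangle=e^{\mathrm{i}(\lambda_j-\lambda_k)t}\langle\phi_j,B\phi_k\rangle$, and I would look for $u$ of the form $u(t)=v(t)\sin\!\big((\lambda_2-\lambda_1)t+\varphi(t)\big)$ with $v,\varphi$ slowly varying and $\|v\|_\infty\le\delta$; the point is that the factors $e^{\mathrm{i}(\lambda_j-\lambda_k)t}$ are rapidly oscillating against $v$ except on the resonance $\lambda_j-\lambda_k=\pm(\lambda_2-\lambda_1)$.

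Next, rescaling time so that the horizon $T$ grows like $\delta^{-1}$ while $\int_0^T v$ stays of order one, I would invoke classical averaging to show that, as $\delta\to0$ and \emph{uniformly in $\alpha\in[0,1]$}, the propagator of (\ref{EQ_main}) restricted to ${\mathcal L}_2$ and read in modulus converges to the flow of a two-dimensional averaged equation $\dot z=\alpha\,(w_1(t)Y_1+w_2(t)Y_2)\,z$ on ${\mathcal L}_2$, where $Y_1,Y_2$ are fixed non-commuting skew-Hermitian matrices (non-commuting because $\langle\phi_1,B\phi_2\rangle\neq0$) and $w_1,w_2$ are real controls built from $v$ and $\varphi$. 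Here Assumption~\ref{ASS_AB}.\ref{ASS_non_degenerate} guarantees that every other pair of eigenvalues sharing the Bohr gap $\lambda_2-\lambda_1$ is either disjoint from $\{\lambda_1,\lambda_2\}$ — hence couples only modes that stay unexcited when one starts inside ${\mathcal L}_2$ — or $B$-decoupled, so no population leaks out of ${\mathcal L}_2$ at first order; and Assumption~\ref{ASS_AB}.\ref{ASS_sspectre_essentiel} keeps the essential spectrum away from the frequencies $2\lambda_1-\lambda_2,\lambda_1,\lambda_2,2\lambda_2-\lambda_1$ that govern the leading off-resonant and second-order terms, so that the averaging remainder is $O(\delta)$ in the effective vector field and still negligible after integration over the $O(\delta^{-1})$ horizon. (The free propagator $X_{(2)}^{u,\alpha}$ only ever contributes $\alpha$-independent phases, invisible in the moduli.) This reduces the proposition to the finite-dimensional claim: every continuous $\alpha\mapsto\hat\Upsilon^\alpha$ with $\hat\Upsilon^0=\mathrm{Id}$ is, up to a global phase, a uniform-in-$\alpha$ limit of products $\alpha\mapsto e^{\alpha Z_N}\cdots e^{\alpha Z_1}$ with $Z_i\in\mathrm{span}(Y_1,Y_2)$ and $N$ arbitrary.

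For this last, geometric, step I would observe that such products are exactly the time-one maps of the single ensemble control system $\frac{\mathrm d}{\mathrm d s}g=\alpha(w_1Y_1+w_2Y_2)g$ on the unitary group of ${\mathcal L}_2$; since $Y_1,Y_2$ Lie-generate and each length-$k$ bracket of the $\alpha Y_i$ carries the factor $\alpha^k$, a Chow–Rashevsky argument together with the Stone–Weierstrass theorem — that is, finite-dimensional ensemble controllability of $\mathrm{SU}(2)$ in the style of Li and Khaneja — shows that the closure of these time-one maps in $C([0,1],U({\mathcal L}_2))$ is exactly $\{g : g(0)=\mathrm{Id}\}$, which contains $\hat\Upsilon^\alpha$ and, after taking moduli, finishes the argument.

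Under Assumption~\ref{ASS_AB_spectre_discret} the only change is to approximate the sinusoidal envelopes by piecewise-constant functions with values in $U\cap[0,\delta]$, which is possible because $\{1/n:n\in\mathbf N\}\subset U$ accumulates at $0$ (Assumption~\ref{ASS_AB_spectre_discret}.\ref{ASS_U_accumul}), and to rerun the same averaging / good-Galerkin estimates using the skew-adjointness of $A+uB$ (Assumption~\ref{ASS_AB_spectre_discret}.\ref{ASS_AuB_skew_adjoint}) and the non-degeneracy Assumption~\ref{ASS_AB_spectre_discret}.\ref{ASS_non_degenerate_discret}. I expect the main obstacle to be precisely the \emph{uniformity in $\alpha$} of the averaging estimate: because the small amplitude forces a long horizon, the higher-order terms of the expansion must be shown to be genuinely $O(\delta)$ in the effective two-dimensional dynamics — which is exactly what the spectral Assumption~\ref{ASS_AB}.\ref{ASS_sspectre_essentiel} (resp.\ the Galerkin bounds) is there to provide.
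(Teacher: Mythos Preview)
Your overall architecture --- averaging to reduce the infinite-dimensional dynamics to a two-dimensional auxiliary ensemble system, then Lie-bracket generation plus polynomial approximation in $\alpha$ (the Li--Khaneja mechanism) to reach any continuous curve $\hat\Upsilon^\alpha$ --- is exactly the paper's. The finite-dimensional geometric step you sketch matches Lemma~\ref{LEM_suivi_polynomes_crochets} and Proposition~\ref{PRO_contr_syst_aux} closely, including the key observation that length-$k$ brackets of the $\alpha Y_i$ carry $\alpha^k$.

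Where you diverge from the paper, and where the gap lies, is in the order and mechanism of the infinite-dimensional reduction. You treat Assumption~\ref{ASS_AB} (mixed spectrum, bounded $B$) as the base case and propose to average \emph{directly}. But your averaging heuristic is phrased entirely in terms of Bohr frequencies $e^{\mathrm{i}(\lambda_j-\lambda_k)t}$ and ``every other pair of eigenvalues sharing the gap'', which is a point-spectrum picture; it says nothing about how population is prevented from leaking into the absolutely continuous subspace of $A$ over the long horizon $T\sim\delta^{-1}$. Assumption~\ref{ASS_AB}.\ref{ASS_sspectre_essentiel} by itself does not give you an $O(\delta)$ remainder here without further work. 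The paper sidesteps this entirely: it first proves the result under Assumption~\ref{ASS_AB_spectre_discret} (pure point spectrum, possibly unbounded $B$) via an explicit Galerkin/variation-of-constants estimate controlling $\|\pi_2\Upsilon^{u_n,\alpha}_t\pi_2-\pi_2 X^{u_n,\alpha}_{(N)}\pi_2\|$ together with commutator bounds on $[\pi_2,X^{u_n,\alpha}_{(N)}]$, and only \emph{then} treats Assumption~\ref{ASS_AB} by Von~Neumann's approximation theorem, replacing $A$ by a pure-point $A_\eta$ with $\|A-A_\eta\|<\eta$. The role of Assumption~\ref{ASS_AB}.\ref{ASS_sspectre_essentiel} in the paper is not to make a direct averaging converge but to give an $\eta$-independent bound on the constant $C$ in Proposition~\ref{PRO_estimates_dim_finie}, hence an $\eta$-independent bound on $T_\eta$, so that $\|{\Upsilon^{u,\alpha,\eta}-\Upsilon^{u,\alpha}}\|\le T_\eta\,\eta\to0$. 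Your ``only change'' for Assumption~\ref{ASS_AB_spectre_discret} is also too quick: there $B$ may be unbounded, and the paper needs the $\ell^2$-decay of $(b_{1,l})_l,(b_{2,l})_l$ (from $\phi_1,\phi_2\in D(B)$) to choose the Galerkin level $N$ --- this is not just a matter of discretising the sinusoidal envelope.
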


In other words, up to an arbitrary small error $\varepsilon$, it is possible to steer the 
eigenvectors $\phi_1$ and $\phi_2$, simultaneously for every $\alpha$, to a target with 
prescribed modulus of coordinates on $\phi_1$ and $\phi_2$.

The contribution of this note relies on the very same idea as \cite{PhysRevA.73.030302}, 
namely the computation of finite dimensional Lie brackets  and a polynomial 
interpolation. The only novelty is that all the steps of the proof come along with 
explicit estimates, which allow to let the dimension of the finite dimensional systems 
tend to infinity  and eventually prove the infinite dimensional  result. 

The main improvements of this work with respect to the cited references are
\begin{itemize}
\item the possibly infinite dimension of the ambient space $H$;
\item the possibility for the spectrum of $A$ to have a continuous part;
\item the possible (finite or not) degeneracy (or multiplicity) of the eigenvalues 
of $A$;
\item (in the case of Assumption \ref{ASS_AB_spectre_discret}) the possible unboundedness 
of operator $B$ with respect to $A$, that is, in a case where Kato-Rellich theorem does not apply to $A+uB$. 
\end{itemize}
\subsection{Content of the paper}

The core of the proof of Proposition \ref{PRO_main_result} is a constructive approximate 
controllability result about the propagator $X_{(n)}^{u,\alpha}$ in some finite dimensional subspaces 
$\mathcal{L}_n$ of $H$ proved in Section \ref{SEC_Finite_dimension}. The precise estimates 
of Section \ref{SEC_averaging} allows to let the dimension of $\mathcal{L}_n$ tend to 
infinity and eventually to prove, in Section \ref{SEC_spectre_discret}, the infinite 
dimensional result for systems satisfying Assumption \ref{ASS_AB_spectre_discret}. In 
Section \ref{SEC_spectre_continu}, we will see that the convergence process used for the 
proof of section \ref{SEC_spectre_discret} is actually robust enough with respect to 
perturbation of the spectrum of $A$ to ensure convergence also for systems satisfying 
Assumption \ref{ASS_AB}. 
The results are applied to the example of the 3D rotation of a 
collection of linear molecules in Section \ref{SEC_Example}.

\section{FINITE DIMENSIONAL PRELIMINARIES}\label{SEC_Finite_dimension}

\subsection{Notations and result}
Let $N$ in $\mathbf{N}$, $A^{(N)},B^{(N)}$ be two matrices in $\mathfrak{u}(N)$ (that is, 
$\overline{A^{(N)}}^T + A^{(N)}= \overline{B^{(N)}}^T + B^{(N)}=0$). We consider the 
continuum of $N$-dimensional systems 
\begin{equation}\label{EQ_main_dim_fin}
x'=A^{(N)}+u\alpha B^{(N)}x, \quad \alpha \in[0,1]
\end{equation}
where $x$ is a point in $\mathbf{C}^N$ endowed with its canonical Hilbert structure 
$\langle \cdot,\cdot,\rangle$. 
For every locally integrable function $u$, we define $X^{u,\alpha}_{(N)}$ the propagator 
associated with (\ref{EQ_main_dim_fin}). 

We assume that $A^{(N)}$ is diagonal in 
$(\phi_j)_{j\leq N}$, the canonical 
basis of $\mathbf{C}^N$,  we denote with $(-\mathrm{i}\lambda_j)_{j\leq N}$ the diagonal 
of $A^{(N)}$ and with $b_{jk}:=\langle \phi_j,B^{(N)}\phi_k\rangle , 1\leq j,k\leq N$ 
the entries of $B^{(N)}$. For every $j\leq N$, we define $\pi_j$, the orthogonal projection of $\mathbf{C}^N$ to ${\mathcal L}_j=\mathrm{span}(\phi_1,\ldots,\phi_j)$. 

\begin{proposition}\label{PRO_main_result_dim_fin}
Assume that $(A^{(N)},B^{(N)},\lambda_1,\phi_1,\lambda_2,\phi_2)$ satisfies 
Assumption \ref{ASS_AB}.
 Let $\hat{\Upsilon}:\alpha \in [0,1]\mapsto  \hat{\Upsilon}^\alpha \in 
 U({\mathcal L}_2)$ be 
a continuous curve of unitary operators on ${\mathcal L}_2$ that satisfies 
$\hat{\Upsilon}^0=\mathrm{Id}_{{\mathcal L}_2}$. Then, for every $\varepsilon >0$, for 
every $\delta>0$, there exists $u:[0,T]\to [-\delta,\delta]$ such that, for $(j,k)$ in 
$\{1,2\}^2$,
$\left ||\langle \phi_j, \Upsilon^{u,\alpha}_{T,0}\phi_k \rangle|-|\langle \phi_j, 
\hat{\Upsilon}^\alpha \phi_k \rangle| \right |<\varepsilon.$
\end{proposition}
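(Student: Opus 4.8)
The plan is to combine a change of representation, small resonant controls, the computation of finite‑dimensional Lie brackets and a polynomial interpolation, as announced in the introduction. First I would pass to the interaction picture: writing $\Upsilon^{u,\alpha}_{T,0}=e^{TA^{(N)}}\tilde\Upsilon^{u,\alpha}_{T}$, the operator $\tilde\Upsilon^{u,\alpha}_{t}$ is the propagator of $x'=u(t)\alpha\,\tilde B(t)x$ with $\tilde B(t)=e^{-tA^{(N)}}B^{(N)}e^{tA^{(N)}}$, whose $(j,k)$ entry is $b_{jk}e^{\mathrm{i}(\lambda_j-\lambda_k)t}$. Since $A^{(N)}$ is diagonal in $(\phi_j)_{j\le N}$ with purely imaginary entries, $e^{-TA^{(N)}}\phi_j=e^{\mathrm{i}\lambda_j T}\phi_j$ has modulus one, hence $|\langle\phi_j,\Upsilon^{u,\alpha}_{T,0}\phi_k\rangle|=|\langle\phi_j,\tilde\Upsilon^{u,\alpha}_{T}\phi_k\rangle|$ and the free evolution at the final time is irrelevant; the moduli are also insensitive to a global phase, so it is enough to approximate the curve $\alpha\mapsto e^{\mathrm{i}\gamma(\alpha)}\hat\Upsilon^\alpha$ for a continuous $\gamma$ with $\gamma(0)=0$ chosen so that this curve takes values in $SU(\mathcal L_2)$. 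Finally, provided the control keeps $\|(\mathrm{Id}-\pi_2)\tilde\Upsilon^{u,\alpha}_{T}\phi_k\|\le\varepsilon/4$ for $k=1,2$ (absence of leakage out of $\mathcal L_2$), it suffices that $\pi_2\tilde\Upsilon^{u,\alpha}_{T}\pi_2$ be within $\varepsilon/2$ in operator norm of $e^{\mathrm{i}\gamma(\alpha)}\hat\Upsilon^\alpha$, uniformly in $\alpha\in[0,1]$. Note that $\alpha=0$ forces $\tilde\Upsilon^{u,0}_{T}=\mathrm{Id}$, consistently with $\hat\Upsilon^0=\mathrm{Id}_{\mathcal L_2}$.

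Next I would drive the system with sinusoidal controls $u(t)=\delta\,\mathrm{Re}(c\,e^{\mathrm{i}(\lambda_2-\lambda_1)t})$, $c\in\mathbf{C}$, resonant with the transition $\phi_1\leftrightarrow\phi_2$. On an interval of length $\tau$ the rotating‑wave/averaging estimate gives $\tilde\Upsilon^{u,\alpha}_{\tau}=\exp(\alpha\delta\tau\,\Xi(c)+R)$ with $\|R\|=o(\delta\tau)$ uniformly in $\alpha\in[0,1]$, where $\Xi(c)\in\mathfrak{su}(\mathcal L_2)$ is supported on $\mathcal L_2$ and $\langle\phi_1,\Xi(c)\phi_2\rangle$ is a nonzero multiple of $c\,b_{12}$ as soon as $c\neq0$. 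Here Assumption \ref{ASS_AB}.\ref{ASS_non_degenerate} is decisive: together with $\lambda_1<\lambda_2$ it forces every transition linking $\phi_1$ or $\phi_2$ to an eigenvector of $A^{(N)}$ of a different energy and oscillating at frequency $\lambda_2-\lambda_1$ --- which must go through one of the energies $2\lambda_1-\lambda_2$ or $2\lambda_2-\lambda_1$ --- to carry zero coupling, so the time average of $u(t)\tilde B(t)$ has no entry connecting $\mathcal L_2$ to the other eigenspaces of $A^{(N)}$ and leakage stays $o(\delta\tau)$. (If $\lambda_1$ or $\lambda_2$ happens to be a degenerate eigenvalue, one also exploits constant‑in‑time controls, whose averaged generator is the eigenspace‑block‑diagonal part of $B^{(N)}$, to rotate the corresponding ``bright'' vector of that eigenspace onto $\phi_2$.) Letting $\arg c$ vary, the generators $\Xi(c)$ sweep the real $2$‑plane $\mathfrak p\subset\mathfrak{su}(\mathcal L_2)$ of off‑diagonal skew‑Hermitian matrices, and concatenating pulses shows that the closure of the set of attainable maps $\alpha\mapsto\pi_2\tilde\Upsilon^{u,\alpha}_{T}\pi_2$ contains every finite product $\prod_i\exp(\alpha s_i\Xi(c_i))$, $s_i\in\mathbf{R}$.

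I would then compute Lie brackets: with $X=\alpha\Xi(c)$ and $Y=\alpha\Xi(c')$, the classical identity $\exp(sX)\exp(sY)\exp(-sX)\exp(-sY)=\exp(s^2[X,Y]+o(s^2))$ yields the motions generated by $\alpha^2[\Xi(c),\Xi(c')]$; since $[\mathfrak p,\mathfrak p]$ spans the remaining line of $\mathfrak{su}(\mathcal L_2)$, iterating brackets yields $\alpha^k M$ for every $k\ge1$ and every $M\in\mathfrak{su}(\mathcal L_2)$, and Trotter products then place in the closure of the attainable set the time‑one solution of $\frac{\mathrm{d}}{\mathrm{d}s}g_\alpha=P(\alpha)g_\alpha$, $g_\alpha(0)=\mathrm{Id}$, for every polynomial map $P:[0,1]\to\mathfrak{su}(\mathcal L_2)$ vanishing at $0$ --- that is, the curve $\alpha\mapsto e^{P(\alpha)}$. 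To conclude, I would approximate $e^{\mathrm{i}\gamma(\alpha)}\hat\Upsilon^\alpha$ uniformly by a continuous curve from $\mathrm{Id}_{\mathcal L_2}$ that avoids $-\mathrm{Id}_{\mathcal L_2}$, so that its logarithm $X(\alpha)\in\mathfrak{su}(\mathcal L_2)$ is continuous with $X(0)=0$, and then, by the Weierstrass theorem, choose a polynomial $P$ vanishing at $0$ uniformly close to $X$: the curve $\alpha\mapsto e^{P(\alpha)}$ is then uniformly close to the target and is reached, in the closure of the attainable set, by a control with values in $[-\delta,\delta]$. Together with the reductions of the first paragraph this proves the proposition. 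This is exactly the Lie‑bracket‑plus‑polynomial‑interpolation mechanism of \cite{PhysRevA.73.030302}, here carried out inside $U(\mathcal L_2)$.

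The step I expect to be the main obstacle is the averaging estimate of the second paragraph, which has to hold \emph{uniformly in the parameter} $\alpha\in[0,1]$: near $\alpha=0$ the effective generator $\alpha\delta\tau\,\Xi(c)$ degenerates, whereas producing an order‑one rotation for $\alpha$ close to $1$ forces the pulse length $\tau$ to grow like $1/\delta$ as $\delta\to0$, so the remainder $R$ must be controlled over a time interval whose length does not stay bounded; simultaneously the leakage out of $\mathcal L_2$ must be kept uniformly small, which is precisely what Assumption \ref{ASS_AB}.\ref{ASS_non_degenerate} delivers. These are the quantitative refinements of the estimates that Section \ref{SEC_averaging} will later need in order to let $N$ tend to infinity.
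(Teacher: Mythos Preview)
Your proposal is essentially the paper's own strategy: pass to the interaction picture, use resonant periodic controls together with an averaging estimate (the paper's Proposition~\ref{PRO_estimates_dim_finie}) to realise the elementary off-diagonal generators $M^\nu_\alpha$ in $\mathfrak{su}(\mathcal L_2)$, then invoke Lie brackets and polynomial interpolation (the paper's Lemma~\ref{LEM_suivi_polynomes_crochets} and Proposition~\ref{PRO_contr_syst_aux}) to hit any continuous target curve starting at the identity. You also correctly identify the uniform-in-$\alpha$ averaging estimate as the crux, which is precisely what Section~\ref{SEC_averaging} supplies.

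Two small differences are worth noting. First, you dispose of the factor $e^{TA^{(N)}}$ by observing that it is diagonal in $(\phi_j)$ with unimodular entries and therefore leaves all moduli $|\langle\phi_j,\cdot\,\phi_k\rangle|$ unchanged; the paper instead invokes Poincar\'e recurrence on the torus to make $e^{TA^{(N)}}$ close to the identity while simultaneously synchronising the pulse length. Your argument is the simpler of the two and is perfectly adequate here. Second, your claim that ``iterating brackets yields $\alpha^k M$ for every $k\ge1$ and every $M\in\mathfrak{su}(\mathcal L_2)$'' is slightly too strong: since the available first-order generators all lie in the off-diagonal plane $\mathfrak p$, iterated brackets produce $\alpha^{k}\mathfrak p$ for $k$ odd and $\alpha^{k}\mathfrak k$ (the diagonal line) for $k$ even, exactly as in the paper's computation~(\ref{EQ_crochets_iteres}). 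This is harmless, because diagonal factors do not affect moduli, so one only needs to approximate the $\mathfrak p$-part of the logarithm by an \emph{odd} polynomial in $\alpha$; the paper makes this explicit by extending the target curve to an odd function on $[-1,1]$ and invoking density of odd polynomials. With that adjustment your argument goes through.
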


The proof of Proposition \ref{PRO_main_result_dim_fin} is split in two steps. In a first 
time, after a suitable change of variable,  we introduce a continuum of two-dimensional  
auxiliary  systems in  Section \ref{SEC_auxiliary_system}. Classical Lie groups technique, 
and the associated uniform convergence estimates, to prove approximate ensemble controllability of  these systems.
In a second time, in Section \ref{SEC_averaging}, we use classical averaging techniques  
to show that the trajectories of the systems introduced in Section 
\ref{SEC_auxiliary_system} can be tracked, with arbitrary precision, by the system  
(\ref{EQ_main_dim_fin}).

\subsection{An auxiliary system}\label{SEC_auxiliary_system}
We consider the continuum of control systems in $\mathbf{U}(2)$
\begin{equation}\label{EQ_syst_aux_avat_chgt_variable}
x_{\alpha}'=\alpha \left ( \begin{array}{ll} b_{11} & b_{12}e^{\mathrm{i}\theta}\\
{b_{21}}e^{-\mathrm{i}\theta} & b_{22} \end{array} \right ) x_{\alpha}, \quad \alpha \in [0,1]
\end{equation}
with initial condition $x_{\alpha}(0)=I_2$ and control function 
$\theta:\mathbf{R}\to \mathbf{R}$. For every piecewise constant function 
$\theta:\mathbf{R}\to \mathbf{R}$, for every $\alpha$, we denote with 
$Y^{\theta,\alpha}$ the propagator of (\ref{EQ_syst_aux_avat_chgt_variable}).

\begin{proposition}\label{PRO_contr_syst_aux}
Let $\hat{\Upsilon}:\alpha \in [0,1]\mapsto  
\hat{\Upsilon}^\alpha \in SU({\mathcal L}_2)$ be 
a continuous curve of unitary operators on ${\mathcal L}_2$ that satisfies 
$\hat{\Upsilon}^0=\mathrm{Id}_{{\mathcal L}_2}$. Then, for every $\varepsilon >0$, there exists $u:[0,T]\to [-\pi,\pi]$ piecewise constant such that, for $(j,k)$ in 
$\{1,2\}^2$,
$$\left ||\langle \phi_j, Y^{\theta,\alpha}_T\phi_k \rangle|-|\langle \phi_j, 
\hat{\Upsilon}^\alpha \phi_k \rangle| \right |<\varepsilon.$$
\end{proposition}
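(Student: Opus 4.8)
The plan is to exploit the structure of the auxiliary system (\ref{EQ_syst_aux_avat_chgt_variable}): for fixed $\theta$, the generator
$M_\theta=\left(\begin{array}{ll} b_{11} & b_{12}e^{\mathrm{i}\theta}\\ b_{21}e^{-\mathrm{i}\theta} & b_{22}\end{array}\right)$
lies in $\mathfrak{u}(2)$, and since $b_{11},b_{22}$ are purely imaginary while $b_{12}\neq 0$ (this is Assumption \ref{ASS_AB}.4), as $\theta$ ranges over $[-\pi,\pi]$ the family $\{M_\theta\}$ generates, via iterated Lie brackets, the whole of $\mathfrak{su}(2)$ (indeed already $M_0$ and $M_{\pi/2}$ suffice to recover the three Pauli-type directions, modulo the trace part which is a fixed imaginary multiple of the identity and commutes with everything). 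First I would record this Lie-algebraic fact and observe that, because the control enters through the \emph{single} parameter $\theta$, the reachable set of (\ref{EQ_syst_aux_avat_chgt_variable}) for a fixed $\alpha>0$ is exactly $SU(2)$ (up to the central $U(1)$ factor, which we are allowed to ignore since the target $\hat\Upsilon^\alpha$ is only to be matched in modulus of entries, and in any case one may take $\hat\Upsilon^\alpha\in SU(\mathcal{L}_2)$ as in the statement).

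Next, the ensemble aspect. The key point is that $Y^{\theta,\alpha}$ depends on $\alpha$ only through the time rescaling $t\mapsto\alpha t$: concatenating constant-$\theta$ pieces of durations $\tau_1,\dots,\tau_m$ with values $\theta_1,\dots,\theta_m$ produces $Y^{\theta,\alpha}_T=e^{\alpha\tau_m M_{\theta_m}}\cdots e^{\alpha\tau_1 M_{\theta_1}}$. So the map $\alpha\mapsto Y^{\theta,\alpha}_T$ is a real-analytic curve in $SU(2)$ starting at the identity when $\alpha=0$, and the problem becomes: approximate an arbitrary continuous curve $\alpha\mapsto\hat\Upsilon^\alpha$ in $SU(2)$ (with $\hat\Upsilon^0=\mathrm{Id}$) uniformly in $\alpha\in[0,1]$ by such products. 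I would do this by polynomial interpolation in $\alpha$, exactly as in \cite{PhysRevA.73.030302}: first approximate $\hat\Upsilon$ uniformly by a curve of the form $\exp(\sum_\ell p_\ell(\alpha) E_\ell)$ for a suitable finite product/Magnus-type expression with $p_\ell$ polynomials vanishing at $0$ and $E_\ell$ a generating set of brackets of the $M_\theta$; then realize each monomial $\alpha^r$ factor by a bracket of order $r$ of the $M_\theta$'s, which by the standard commutator formula $e^{-sX}e^{-sY}e^{sX}e^{sY}=e^{s^2[X,Y]+O(s^3)}$ (and its higher-order analogues) is generated by short pulses whose \emph{net} effect scales like the required power of $\alpha$ because the $\alpha$-rescaling of time turns a duration-$s$ pulse into an $\alpha s$ pulse. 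Composing, one gets a single piecewise-constant $\theta:[0,T]\to[-\pi,\pi]$ whose propagator matches $\hat\Upsilon^\alpha$ up to $\varepsilon$ for all $\alpha$ simultaneously.

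Concretely I would carry out the steps in this order: (i) verify the Lie algebra rank condition, namely that $\{M_\theta:\theta\in[-\pi,\pi]\}$ generates $\mathfrak{su}(2)$ modulo center, using $\langle\phi_1,B^{(N)}\phi_2\rangle\neq 0$; (ii) deduce exact controllability on $SU(2)$ for each fixed $\alpha>0$ and, more importantly, extract an explicit finite list of bracket directions together with the commutator identities expressing them; (iii) set up the ensemble interpolation — approximate the target curve $\hat\Upsilon^\alpha$ in $SU(2)$, uniformly on $[0,1]$, by a finite product of exponentials whose generators are multiplied by polynomials in $\alpha$, invoking Weierstrass/Stone–Weierstrass for the coefficient functions and continuity of $\hat\Upsilon$ with $\hat\Upsilon^0=\mathrm{Id}$; (iv) realize each polynomial-in-$\alpha$ factor by nested commutators of the $M_\theta$, using the $\alpha$-homogeneity of the time rescaling to get the correct powers of $\alpha$, and control the $O(\cdot)$ remainder terms uniformly in $\alpha\in[0,1]$; (v) concatenate and pass from $SU$ to the $U(2)$ statement by discarding the (entrywise-modulus-invariant) global phase, and note the bound $\theta\in[-\pi,\pi]$ is free since $\theta$ only enters $2\pi$-periodically. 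The main obstacle is step (iv): ensuring that the higher-order Lie-bracket remainders in the commutator expansions are \emph{uniformly} small over the whole parameter interval $\alpha\in[0,1]$ — one must choose the elementary pulse length small enough that every error term, after the $\alpha$-rescaling, is dominated by $\varepsilon$ for all $\alpha\le 1$ at once; since the expansions are real-analytic and the relevant constants depend only on the fixed matrices $M_{\theta_i}$ and the fixed interpolation data, this is a matter of careful but routine bookkeeping of the Baker–Campbell–Hausdorff remainders, and it is precisely the place where the "explicit estimates" advertised in the introduction are needed.
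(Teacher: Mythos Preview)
Your plan is correct and matches the paper's argument almost step for step: polynomial approximation of the logarithm of $\hat\Upsilon^\alpha$, realization of each $\alpha^{k}$–weighted $\mathfrak{su}(2)$ direction as a length-$k$ iterated bracket of the generators, and uniform-in-$\alpha$ control of the Baker--Campbell--Hausdorff remainders (the paper packages this last point as Lemma~\ref{LEM_suivi_polynomes_crochets}, and, like you, cites \cite{MR2191545} for the idea).

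The one point on which the paper is more careful is the diagonal part of $M_\theta$. Your step (v) treats it as a ``global phase'', but $\mathrm{diag}(b_{11},b_{22})$ has a traceless $\sigma_z$ component that does \emph{not} commute with the off-diagonal generators, so it cannot simply be discarded as central. The paper instead performs the time-dependent diagonal change of variable (\ref{EQ_chgt_de_variable}), which reduces to the purely off-diagonal generators $M^\nu_\alpha$ of (\ref{EQ_Vitesse_elementaire_syst_aux}), and then observes (Lemma~\ref{PRO_egalite_modules}) that this substitution preserves the moduli $|\langle\phi_j,\cdot\,\phi_k\rangle|$. With the diagonal removed, the iterated adjoints $\mathrm{ad}^k_{M^{\pi/2}_1}M^0_1$ already furnish the required bracket directions, and the paper restricts to \emph{odd} polynomials in $\alpha$ accordingly. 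This is a minor cleanup rather than a different strategy; your outline would go through once this point is handled.
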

\begin{proof}
Let $\varepsilon>0$. There exists a continuous function $\tilde{\upsilon}:\alpha\mapsto 
\tilde{\upsilon}^\alpha \in \mathfrak{su}(\mathcal{L}_2)$ such that, for every $\alpha$ in 
$[0,1]$, $\|\exp(\tilde{\upsilon}^\alpha)-\hat{\Upsilon}^\alpha\|<\varepsilon$. By density 
of odd polynomials mapping, for the norm of uniform convergence, in the set of odd 
continuous functions, there exists a polynomial mapping $P:\alpha\mapsto 
P_\alpha=\sum_{l=0}^N \alpha^{2l+1} Z_{2l+1}$, with $Z_1,Z_3,\ldots, Z_{2N+1}$ in 
$\mathfrak{su}(2)$ such that $\|P_\alpha-\tilde{\upsilon}^\alpha\|<\varepsilon$ for every 
$\alpha$ in $[0,1]$.

\begin{lemma}\label{LEM_suivi_polynomes_crochets}
Let $X$ and $Y$ two matrices in $\mathfrak{su}(2)$, and $(C_j(X,Y))_{1\leq j\leq p}$ a 
sequence of iterated brackets of $X$ and $Y$. We denote with $l_j$ the length of the 
bracket $C_j(X,Y)$ (the length of $[X,Y]$ is 1). Then, for every real sequence 
$(\beta_j)_{1\leq j\leq p}$, for every $T$ in $\mathbf{R}$, for every $\varepsilon>0$, 
there exists a finite sequence $(t_k)_{1\leq k \leq m}$ in $\mathbf{R}$ such that, for 
every $\alpha$ in $[0,1]$, 
$\|P_{\alpha}-e^{T \sum_{j=1}^p \beta_j \alpha^{l_j}C_j(X,Y)}\|<\varepsilon$, 
where $P_{\alpha}$ is the
 product of matrices $e^{t_1 \alpha X}e^{t_2 \alpha Y}\cdots e^{t_{m-1}\alpha X}e^{t_m \alpha Y}$.
\end{lemma}
\begin{proof}
This result is very classical when $\alpha=1$ (i.e., one considers one system only). The   
uniform version presented here (with $\alpha$ in $[0,1]$)  is basically contained in 
\cite{MR2191545}. Because of its importance for our purpose, we give below a sketch of the 
proof of the result.

We first assume that $p=1$ and we proceed by induction on the length of $C_1$. 
From the Baker-Campbell-Hausdorff formula, we deduce that, for every $2\times 2$ matrices $X,Y$, there exists a function $g_{X,Y}:\mathbf{R}\to \mathfrak{gl}_2$ tending to $0_{2\times 2}$ at 0 such that, for every $t$ in $\mathbf{R}$, for every $\alpha$ in $[0,1]$,
$$ e^{t\alpha X}e^{t\alpha Y}e^{-t \alpha X}e^{-t \alpha Y}=e^{\alpha ^2 t^2[X,Y]+g_{X,Y}(\alpha t)\alpha^2 t^2}.$$
As a consequence, for every $2\times 2$ matrices $X$ and $Y$, 
\begin{equation}\label{EQ_estimation_crochets}
\lim_{t\to 0} \frac{1}{t^2}\left \| e^{t\alpha X}e^{t\alpha Y}e^{-t \alpha X}e^{-t \alpha Y} - e^{\alpha ^2 t^2[X,Y]}\right \| = 0,
\end{equation}
 the convergence being uniform with respect to $\alpha$ in $[0,1]$.

Recall that, for every $V,W$ in $\mathfrak{su}(2)$, for every $n$ in $\mathbf{N}$, 
\begin{eqnarray}
\|V^n-W^n\|&=& V(V^{n-1}-W^{n-1})+(V-W)W^{n-1} \nonumber\\
&\leq & \|V^{n-1}-W^{n-1}\|+\|V-W\| \nonumber\\
&\leq & n\|V-W\|.\label{EQ_maj_diff_puissance}
\end{eqnarray}
Hence, 
\begin{eqnarray}
\lefteqn{\left \| \left ( e^{t\alpha X}e^{t\alpha Y}e^{-t \alpha X}e^{-t \alpha Y}
\right )^n- e^{n \alpha^2 t^2[X,Y]}\right \| \leq } \nonumber\\
&\quad \quad &n \left \| e^{t\alpha X}e^{t\alpha Y}e^{-t \alpha X}e^{-t \alpha Y} - e^{\alpha ^2 t^2[X,Y]}\right \|\label{EQ_maj_crochets_ordre_1}
\end{eqnarray}
Choosing $n=T/t^2$ and letting $n$ tend to infinity (and hence $t$ tend to zero) gives the result for $p=1$, 
$\beta_1=1$ and $l_1=1$. The proof for $l_1>1$ is very similar, replacing $X$ and $Y$ by 
the suitable iterated brackets in (\ref{EQ_maj_crochets_ordre_1}).

A consequence of Zassenhauss formula is that, for every $2\times 2$ matrices $U,V$, there exists a locally Lipschitz function $g:\mathfrak{gl}(2)\times\mathfrak{gl}(2)\times \mathbf{R}\to \mathbf{R}$ that vanishes as soon as one of its entries vanishes  such that, for every $t$ in $\mathbf{R}$, for every $\alpha$ in $[0,1]$, for every $j,k$ in $\mathbf{N}$, 
$$ \|e^{t  U}e^{t  V}-e^{t  (U + V)}\|\leq t^2 g(U,V,t).$$
The proof of Lemma \ref{LEM_suivi_polynomes_crochets}, for $p>1$ and $\beta_j$ not necessarily equal to 1, follows by choosing $t=T/n$ for $n$ large enough and using once again (\ref{EQ_maj_diff_puissance}). 
\end{proof}

We come back to the proof of Proposition \ref{PRO_contr_syst_aux}.
After the time dependent change of variable
\begin{equation}\label{EQ_chgt_de_variable}
y_\alpha=\exp \left \lbrack -t \alpha  \left ( 
\begin{array}{cc}
b_{11} & 0\\ 0 & b_{22}
\end{array}
\right )\right \rbrack x_\alpha,
\end{equation}
the system (\ref{EQ_syst_aux_avat_chgt_variable}) reads
\begin{equation}
y_{\alpha}'= \alpha \left ( \begin{array}{ll} 0 & b_{12}e^{\mathrm{i}\theta-t(b_{11}-b_{22})}\\
{b_{21}}e^{-\mathrm{i}\theta+t(b_{11}-b_{22})} & 0 \end{array} \right ) y_{\alpha},
\end{equation}
or $y_\alpha'=M^\nu_\alpha y_\alpha,$ defining $\nu= \theta -\mathrm{i}t (b_{11}-b_{22})$,
with
\begin{equation}\label{EQ_Vitesse_elementaire_syst_aux}
M^\nu_\alpha=\alpha \left ( \begin{array}{ll} 0 & b_{12}e^{\mathrm{i}\nu}\\
{b_{21}}e^{-\mathrm{i}\nu} &0 \end{array} \right ).
\end{equation}

\begin{lemma}\label{PRO_egalite_modules}
For every $\phi$ in $\mathbf{C}^2$, for every $t$ in $\mathbf{R}$, for every 
locally integrable $\theta:\mathbf{R}\to \mathbf{R}$, the moduli of the coordinates in the 
canonical basis  $(\phi_1,\phi_2)$ of $\mathbf{C}^2$ of $y_\alpha(t)\psi$ and 
$x_\alpha(t)\psi$ are the same. 
\end{lemma}
\begin{proof}
From (\ref{EQ_chgt_de_variable}), the coordinates of $y_\alpha$ and $x_\alpha$ are equal, 
up to a phase shift  depending on time and  $\alpha$. 
\end{proof}
 Thanks to Lemma \ref{LEM_suivi_polynomes_crochets},  Proposition \ref{PRO_contr_syst_aux} 
 follows if, for every $l$, the matrix $Z_{2l+1}$ defined above can be realized as a 
 linear combination (with real coeeficients) of brackets of length exactly equal to $2l+1$ 
 of the matrices $M^\nu_1, \nu \in [-\pi,\pi]$. 
Notice that 
$$
M^0_1=  \left (\begin{array}{ll} 0 & b_{12} \\ b_{21}& 0\end{array} \right ) 
\mbox{ and } M^{\frac{\pi}{2}}_1= 1 \left (\begin{array}{ll} 0 & \mathrm{i} b_{12} \\ 
-\mathrm{i}b_{21}& 0\end{array} \right ).
$$
Straightforward computations give, for every $k$ in $\mathbf{N}$,
\begin{equation}\label{EQ_crochets_iteres}
ad_{M^{\frac{\pi}{2}}_\alpha}^k M^0_\alpha= \alpha^{2k+1} \left ( \begin{array}{ll} 0 & b_{12} |b_{12}|^{2k}\\ b_{21} |b_{12}|^{2k} &0  \end{array}\right ),
\end{equation}
Proposition \ref{PRO_contr_syst_aux} follows from the fact that $b_{12}\neq 0$. 
\end{proof}

\subsection{Averaging techniques}\label{SEC_averaging}

We define the $N\times N$ matrix $N^\theta_{\alpha}$ by $N^\theta_{\alpha}(j,k)=0$ for 
every $j,k$ in $\{1,\ldots,N\}^2$ but  
$N^\theta_{\alpha}(1,2)=\alpha b_{12}e^{\mathrm{i}\theta}$ and 
$N^\theta_{\alpha}(2,,1)=-\overline{N^\theta_{\alpha}(1,2)}$. 
In particular, $N^\theta_{\alpha}$ belongs to $\mathfrak{su}(N)$ and 
$\pi_2 N^\theta_{\alpha}\pi_2=M^\theta_\alpha$. 

Let us come back to the proof of Proposition \ref{PRO_main_result_dim_fin}. 
From Proposition \ref{PRO_contr_syst_aux}, it is enough to show that, for every $\theta,t$ 
in $\mathbf{R}$ and every $\varepsilon>0$, there exists 
$u_{\varepsilon}:[0,T_\varepsilon]\to (-\delta,\delta)$ such that, for every 
$\alpha$ in $[0,1]$, 
$\|\pi_2 X^{u_\varepsilon}_{(N)}(T_\varepsilon)-e^{t N^\theta_{\alpha}}\|<\varepsilon$.
This is exactly the content of Proposition \ref{PRO_estimates_dim_finie}, whose proof is given in \cite{periodic}.
\begin{proposition}\label{PRO_estimates_dim_finie}
Let $u^{\ast}:\mathbf{R}^+\rightarrow \mathbf{R}$ be a locally integrable function.

 Assume that $u^{\ast}$ is periodic with period $T=\frac{2\pi}{|\lambda_2-\lambda_1|}$
 and that $\displaystyle{\int_0^T \!\!\! u^{\ast} (\tau) e^{  \mathrm{i}
 (\lambda_{l}-\lambda_{m})\tau} \mathrm{d}\tau = 0}$
for every $\{l,m\}$ such that $\{l,m\}\cap \{1,2\}\neq \emptyset$ and ${\lambda_l-\lambda_m}\in (\mathbf{Z}\setminus\{\pm1\})(\lambda_1-\lambda_2)$ and
$b_{lm} \neq 0$.
  For every $n$, define $v_n:t\mapsto 1/n \int_0^t|u^{\ast}(s)|\mathrm{d}s$ and the
  $N\times N$ matrix $M^\dag$ with entries $m_{j,k}^{\dag}=b_{jk} {\int_0^T u^\ast(s)
   e^{\mathrm{i}(\lambda_2-\lambda_1)s} \mathrm{d}s}/{\int_0^T |u^\ast(s)|\mathrm{d}s}$.

If $\displaystyle{\int_0^T \!\!\! u^{\ast} (\tau) e^{  \mathrm{i}(\lambda_{2}-\lambda_{1})\tau} \mathrm{d}\tau \neq 0}$,
 then, for every $n$ in $\mathbf{N}$, for every $t\leq nT^\ast$, 
\begin{eqnarray}\label{EQ_major_dist_propagateur}
\frac{\|X^{u_n}_{(N)}(t,0)-e^{tA^{(N)}}e^{v_n^{[-1]}(t) M^{\dag}}\|}{I(C+1)  \|B^{(N)}\|} 
\leq
\frac{ 1 + 2K \|B^{(N)}\|}{n}\label{EQ_estimation_conv_uniform_propaga_dim_finie}.
\end{eqnarray}
with 
$$T^{\ast}=  \frac{\pi  T}{2 |b_{1,2}|  \left |\int_0^T \!\! \!u^{\ast}(\tau)e^{\mathrm{i}
(\lambda_{1}-\lambda_{2}) \tau}  \mathrm{d}\tau \right |},\quad I=\int_0^T \! \!|u^{\ast}
(\tau)|\mathrm{d}\tau,$$
$$ K=\frac{IT^{\ast}}{T}, C=\sup_{(j,k)\in \Lambda} \left | \frac{\int_0^T u^{\ast}(\tau) 
e^{\mathrm{i} (\lambda_j-\lambda_k)\tau}\mathrm{d}\tau}{\sin \left ( \pi\frac{|
\lambda_j-\lambda_k|}{|\lambda_2-\lambda_1|} \right )} \right |,$$
where $\Lambda$ is the set of all pairs $(j,k)$ in  $\{1,\ldots,N\}^2$  such that $b_{jk} 
\neq 0$ and $\{j,k\}\cap\{1,2\} \neq \emptyset$ and $ |\lambda_j-\lambda_k|\notin 
\mathbf{Z}|\lambda_2-\lambda_1|$.
\end{proposition}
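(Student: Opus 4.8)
The plan is to prove this by a quantitative high-oscillation averaging argument carried out in the interaction picture, following \cite{periodic}. First I would write $X^{u_n}_{(N)}(t,0)=e^{tA^{(N)}}Q_n(t)$, where $Q_n$ solves $Q_n'=u_n(t)\,\Xi(t)\,Q_n$ with $Q_n(0)=\mathrm{Id}$ and $\Xi(t)=e^{-tA^{(N)}}B^{(N)}e^{tA^{(N)}}$; since $A^{(N)}$ is diagonal with entries $-\mathrm{i}\lambda_j$, the $(j,k)$ entry of $\Xi(t)$ is $b_{jk}e^{\mathrm{i}(\lambda_j-\lambda_k)t}$, so $\Xi$ is a sum of pure oscillations. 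As $e^{tA^{(N)}}$ is unitary, the statement reduces to estimating $\|Q_n(t)-e^{v_n^{[-1]}(t)M^{\dagger}}\|$ on $[0,nT^{\ast}]$ (the same computation applies to $\alpha B^{(N)}$, uniformly in $\alpha\in[0,1]$, since $|\alpha|\le1$). The control $u_n$ is the slowed-down version of $u^{\ast}$: small amplitude $O(1/n)$, base period $T$, run over the long horizon $nT^{\ast}$, with $v_n(t)=\int_0^t|u_n(s)|\,ds$ playing the role of the internal clock of the averaged dynamics --- this is the classical averaging regime.

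Second, I would split $\Xi=\Xi^{\mathrm{res}}+\Xi^{\mathrm{osc}}$, where $\Xi^{\mathrm{res}}$ collects the entries $b_{jk}e^{\mathrm{i}(\lambda_j-\lambda_k)t}$ for which $\lambda_j-\lambda_k\in\mathbf{Z}(\lambda_2-\lambda_1)$ (the frequencies commensurate with the $T$-periodic control) and $\Xi^{\mathrm{osc}}$ the genuinely non-resonant ones. For the resonant part, the hypothesis that $\int_0^T u^{\ast}(\tau)e^{\mathrm{i}(\lambda_l-\lambda_m)\tau}\,d\tau=0$ whenever $\{l,m\}\cap\{1,2\}\neq\emptyset$, $\lambda_l-\lambda_m\in(\mathbf{Z}\setminus\{\pm1\})(\lambda_1-\lambda_2)$ and $b_{lm}\neq0$ annihilates every commensurate coupling on rows and columns $1$ and $2$ except the fundamental one $\lambda_j-\lambda_k=\pm(\lambda_2-\lambda_1)$, whose period average, normalised by $\int_0^T|u^{\ast}|$, is exactly $M^{\dagger}$. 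For the oscillatory part, repeated integration by parts of $t\mapsto\int_0^t u_n(s)\Xi^{\mathrm{osc}}(s)\,ds$ over the $T$-periods shows this primitive is bounded \emph{uniformly in the length of the horizon} by $\tfrac1n$ times a constant assembled from $I$ and the partial geometric sums $\sum_{p}e^{\mathrm{i}p(\lambda_j-\lambda_k)T}$; evaluating those sums produces precisely the factors $1/\sin\big(\pi|\lambda_j-\lambda_k|/|\lambda_2-\lambda_1|\big)$ appearing in $C$, and the assumption $|\lambda_j-\lambda_k|\notin\mathbf{Z}|\lambda_2-\lambda_1|$ is exactly what keeps those sines away from zero.

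Third, I would run a Duhamel--Gronwall comparison. Writing $Q_n=\mathrm{Id}+\int_0^t u_n(s)(\Xi^{\mathrm{res}}+\Xi^{\mathrm{osc}})Q_n\,ds$, integrating the $\Xi^{\mathrm{osc}}$ contribution by parts (this is where the horizon-uniform bound above is used, and a factor $1/n$ is gained), and replacing $\Xi^{\mathrm{res}}$ by its period average up to a further error of order $\|B^{(N)}\|/n$, one finds that $Q_n$ satisfies, up to an $O(1/n)$ forcing, the averaged equation whose fundamental solution is $e^{v_n^{[-1]}(t)M^{\dagger}}$. A Gronwall estimate on $[0,nT^{\ast}]$ then yields the announced bound $\|Q_n(t)-e^{v_n^{[-1]}(t)M^{\dagger}}\|\le\tfrac{1+2K\|B^{(N)}\|}{n}\,I(C+1)\|B^{(N)}\|$, the Gronwall exponential staying $O(1)$ because the relevant accumulated generator stays bounded on the horizon --- which is precisely why $T^{\ast}$ and $K=IT^{\ast}/T$ are chosen as they are.

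The main obstacle is the uniform-in-horizon control of the non-resonant primitives together with the sharp bookkeeping of the constants. One must check that the partial sums $\sum_{p=0}^{m}e^{\mathrm{i}p(\lambda_j-\lambda_k)T}$ do not grow with $m$ --- which is exactly where the spectral separation enters, through the $1/\sin$ denominators of $C$ --- and then make sure that throughout the Duhamel iteration those $O(1)$ primitives are always multiplied by the genuinely small $1/n$ coming from the amplitude of $u_n$, and never by a quantity that grows with the long horizon $nT^{\ast}$. The matrix-exponential identities, the unitary bounds, and the change of clock $v_n$ are all routine; what takes real work, and is why the computation is performed in full in \cite{periodic}, is propagating the precise factors $I(C+1)\|B^{(N)}\|$ and $2K\|B^{(N)}\|$ through the Gronwall step without slack.
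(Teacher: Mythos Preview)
The paper does not supply its own proof of this proposition: the sentence immediately preceding the statement reads ``whose proof is given in \cite{periodic}'', and nothing further is argued here. Your sketch --- interaction picture, resonant/non-resonant splitting of $e^{-tA^{(N)}}B^{(N)}e^{tA^{(N)}}$, horizon-uniform bounds on the oscillatory primitives via the $1/\sin$ denominators, and a Duhamel--Gronwall comparison on $[0,nT^{\ast}]$ --- is precisely the classical averaging computation one expects \cite{periodic} to contain, and you explicitly acknowledge that the full bookkeeping is carried out there; so there is no discrepancy to report, and your outline is correct as far as it goes.
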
 

\begin{proof}(Proposition \ref{PRO_main_result_dim_fin})
 We apply Proposition \ref{PRO_estimates_dim_finie} with $u^\ast$  
periodic with period $T=\frac{2\pi}{|\lambda_2-\lambda_1|}$
 and satisfying  $\displaystyle{\int_0^T \!\!\! u^{\ast} (\tau) e^{  \mathrm{i}(\lambda_{2}-\lambda_{1})\tau} \mathrm{d}\tau \neq 0}$ $\displaystyle{\int_0^T \!\!\! u^{\ast} (\tau) e^{  \mathrm{i}
 (\lambda_{l}-\lambda_{m})\tau} \mathrm{d}\tau = 0}$
for every $\{l,m\}$ such that $\{l,m\}\cap \{1,2\}\neq \emptyset$ and ${\lambda_l-\lambda_m}\in (\mathbf{Z}\setminus\{\pm1\})(\lambda_1-\lambda_2)$ and
$b_{lm} \neq 0$. Such a $u^\ast$ can be chosen of the form $t\mapsto \cos((\lambda_2-\lambda_1)t-\theta)$ or piecewise constant with value in $\{0,1\}$ (for an explicit construction of such a function, see \cite{Schrod2}). 

To ensure that $e^{v_n^{[-1]}(t)M^\dag}$ is $\varepsilon$-close to $e^{ t M_\alpha^\nu}$,
 defined as  in (\ref{EQ_Vitesse_elementaire_syst_aux}), one chooses $t$ such that 
 $v_n(t)=b_{12}e^{\mathrm{i}\theta}/m_{12}^\dag \leq n T^\ast$. One can check from the 
 definition of $v_n$ that $t/(n r^\ast) $ tends to 1 as $n$ tends to infinity, where $r^\ast$ is defined by  
 $$r^\ast= \frac{T}{I} v_1^{[-1]}\left (\frac{b_{12}e^{\mathrm{i}\theta}}
 {m_{12}^\dag}\right )  .$$ 

The final step in the proof of Proposition \ref{PRO_main_result_dim_fin} is to get rid of 
the phase $e^{tA^{(N)}}$ in estimate (\ref{EQ_estimation_conv_uniform_propaga_dim_finie}).
We use the Poincar\'e Recurrence Theorem with the mapping $$
\begin{array}{lcl}
\mathbf{R} & \to & \mathbf{R}^{n+1}/\mathbf{Z}^ {n+1}\\
s&\mapsto &\left ( 
\frac{\lambda_1}{2\pi} s,\frac{\lambda_2}{2\pi}  s,\ldots,\frac{\lambda_n}{2\pi} s, 
\frac{1}{r^\ast} s \right )
\end{array}$$ on the $n+1$ dimensional torus. For every $\varepsilon>0$, there exists a 
sequence $(s_k)_{k\in \mathbf{N}}$ that tends to infinity such that $s_k \lambda_j$ is 
$\varepsilon$ close to $2\pi \mathbf{Z}$ and $s_k$ is $\varepsilon$-close to $ r^\ast
\mathbf{Z}$. The sequence of controls $\lfloor r^\ast/s_k \rfloor u^{\ast} $ gives Proposition 
\ref{PRO_main_result_dim_fin} by letting $k$ tend to infinity.
\end{proof}

\section{INFINITE DIMENSIONAL ESTIMATES}
\subsection{Heuristic of the proof}
In this Section, we proceed to the proof of Proposition \ref{PRO_main_result}. 
Inspired by Section \ref{SEC_Finite_dimension}, it is enough to show that the projections 
of each of the infinite dimensional systems (\ref{EQ_bilinear_continuum}) can track, with 
an arbitrary precision, the trajectories of the $2\times 2$ system 
(\ref{EQ_syst_aux_avat_chgt_variable}).

To begin with, we consider  in Section \ref{SEC_spectre_discret} a system that satisfies 
Assumption \ref{ASS_AB_spectre_discret}. The proof is  a uniform version of the 
Section 4 of \cite{periodic} which is valid for one particular $\alpha$. 

To prove Proposition \ref{PRO_main_result} for systems that satisfy 
Assumption \ref{ASS_AB}, we first estimate the robustness of the results of 
Section \ref{SEC_spectre_discret} against a perturbation of the spectrum of $A$. 
The conclusion will follow from the Von Neumann approximation theorem. As in Section 
\ref{SEC_spectre_discret}, the method of the proof in Section \ref{SEC_spectre_continu} is 
similar to the one used in \cite{periodic}, the only difference lying once again in the 
uniformity of the convergence estimates with respect to $\alpha$ in $[0,1]$.

\subsection{If the eigenvectors of $A$ span a dense subspace of $H$}\label{SEC_spectre_discret}

Let $(A,B,U,\Lambda,\Phi)$ satisfy Assumption \ref{ASS_AB_spectre_discret}, $\theta$ in 
$[-\pi,\pi]$ and $r,\varepsilon > 0$.
We aim to find a periodic control $u^\ast$ with period $2\pi/(\lambda_2-\lambda_1)$, $n$ 
in $\mathbf{N}$ and $T$ in $\mathbf{R}^+$ such that, for every $\alpha$ in $[0,1]$,
$$\|\Upsilon^{u^\ast/n,\alpha}_{T,0} - e^{r M^\theta_\alpha}\|<\varepsilon.$$

Since $\phi_1$ and $\phi_2$ belong to the domain of $B$, the sequences $(b_{1,l})_{l\in \mathbf{N}}$ and
$(b_{2,l})_{l\in \mathbf{N}}$ are  in $\ell^2$. Hence, there exists $N$ in $\mathbf{N}$ such that $\|\pi_2 B(1-\pi_N)\|=\|(1-\pi_N) B\pi_2 \|<5  \varepsilon/(2r) .$
Define $\omega=\lambda_2-\lambda_1$ and $u^\ast$ with period $2\pi/\omega$ in such a way 
that the $N\times N$ matrix $M^\dag$ of Proposition \ref{PRO_estimates_dim_finie} is equal 
to $M^\theta_1$ and the efficiency
 $|\int_0^{\frac{2\pi}{\omega}} u^\ast(s) e^{\mathrm{i}\omega s}\mathrm{d}s|
 /\int_0^{\frac{2\pi}{\omega}} |u^\ast(s)| \mathrm{d}s  $ of $u^\ast$ for the transition 
 $(1,2)$ is larger than $2/5$. This can be done, for instance, with 
 $t\mapsto \cos(wt-\theta)$ (efficiency $\pi/4$) in the case where $B$ is bounded or, in 
 the general case of Assumption \ref{ASS_AB_spectre_discret}, with a piecewise constant 
 function taking value in $\{0,1\}$ as described in \cite{Schrod2}.

For a given $n$ to be precised later, consider system (\ref{EQ_main}) with control $u_n=u^\ast/n$ in projection   on $\mathrm{span}(\phi_1,\ldots,\phi_N)$:
\begin{eqnarray}
 \pi_N \frac{\mathrm{d}}{\mathrm{d}t} \Upsilon^{u_n,\alpha}_t \phi_j &=&
(A^{(N)} + u_n(t)\alpha  B^{(N)}) \Upsilon^{u_n,\alpha}_t \phi_j \nonumber \\&&+u_n(t) \pi_N \alpha B (1-\pi_N) \Upsilon^{u_n,\alpha}_t \phi_j.~~~~
\end{eqnarray}
From the variation of the constant, we get, for $j=1,2$,
\begin{eqnarray}
\lefteqn{\pi_N \Upsilon^{u_n,\alpha}_t \phi_j= X_{(N)}^{u_n,\alpha}(t,0) \phi_j} \nonumber\\
&\! &+\!\!\! \int_0^t\!\!\!u_n(s) X^{u_n,\alpha}_{(N)}(t,s) \pi_N \alpha B (1-\pi_N) \Upsilon^{u_n,\alpha}_t \phi_j \mathrm{d}s.~~~\label{EQ_proj_N}
\end{eqnarray}
Project (\ref{EQ_proj_N}) on $\mathrm{span}(\phi_1,\phi_2)$, and recall that $\pi_N \pi_2=\pi_2 \pi_N=\pi_2$ for $N\geq 2$:
\begin{eqnarray}
 \lefteqn{\pi_2 \Upsilon^{u_n,\alpha}_t \phi_j= \pi_2 X_{(N)}^{u_n,\alpha}(t,0) \phi_j} 
  \label{EQ_pre_bracket}\\&&+ \int_0^t \!\!\!u_n(s) \pi_2 X_{(N)}^{u_n,\alpha}(t,s) \pi_N \alpha B (1-\pi_N) \Upsilon^{u_n,\alpha}_t \phi_j \mathrm{d}s.\nonumber
\end{eqnarray}
Define, for every $t,s$ in $\mathbf{R}$, the bounded linear mapping $[\pi_2, X^{u_n,\alpha}_{(N)}(t,s)]:=\pi_2\circ X_{(N)}^{u_n,\alpha}(t,s)-X_{(N)}^{u_n,\alpha}(t,s) \circ \pi_2$. Equation (\ref{EQ_pre_bracket}) reads, for $j=1,2$,
\begin{eqnarray}
\lefteqn{\pi_2 \Upsilon^{u_n,\alpha}_t \phi_j- \pi_2 X_{(N)}^{u_n,\alpha}(t,0) \phi_j =}\nonumber \\
&&
-{\int_0^t\!\!\! u_n(s)  X_{(N)}^{u_n,\alpha}(t,s) \pi_2 \alpha B (1-\pi_N) \Upsilon^{u_n,\alpha}_t \phi_j \mathrm{d}s}  \label{EQ_post_bracket} \\
&& +
{\int_0^t\!\!\! u_n(s) [\pi_2, X_{(N)}^{u_n,\alpha}(t,s)]  \pi_N \alpha B (1-\pi_N) \Upsilon^{u_n,\alpha}_t \phi_j \mathrm{d}s}.\nonumber
\end{eqnarray}
Extend the definition of $M^{\alpha}_\theta$ to $H$ by $M^{\dag}=0$ on ${\mathcal L}_N^\perp$ and
define the linear operator $E^{n,\alpha}_{N}(t):=X_{(N)}^{u_n}(t,0)-e^{v_n^{[-1]}(t)M^{\alpha}_\theta}\!\!$.
 Since the commutator $[\pi_2,M^{\alpha}_\theta]=\pi_2 M^{\alpha}_\theta-M^{\alpha}_\theta \pi_2$ vanishes, we have, for every $t$ in $\mathbf{R}$,
\begin{eqnarray*}
\|[\pi_2, X_{(N)}^{u_n,\alpha}(t,0)]\|&=&\|[\pi_2, e^{v^{[-1]}(t)M^{\dag}} +E^{n,\alpha}_{(N)}(t)]\|\\
&=&\|[\pi_2,E^{n,\alpha}_{(N)}(t)]\|\leq 2 \|E^{n,\alpha}_{(N)}(t)\|.
\end{eqnarray*}
Note also that, for every $t$ in $\mathbf{R}$,
\begin{eqnarray*}
\lefteqn{\|[\pi_2, X_{(N)}^{u_n}(0,t)]\|\!\!=\!\!
\|X_{(N)}^{u_n}\!(0,t) [X_{(N)}^{u_n}\!(t,0), \pi_2] X_{(N)}^{u_n}\!(0,t)    \|}\\
&\quad \quad \quad \quad \quad  \quad \,  \leq &2 \|E^{n,\alpha}_{(N)}(t)\|.\quad \quad \quad \quad\quad \quad \quad \quad 
\end{eqnarray*}
For every $s,t$ in $\mathbf{R}$,
\begin{eqnarray*}
 \lefteqn{[\pi_2, X_{(N)}^{u_n}(t,s)]}\\
&=&\pi_ 2X_{(N)}^{u_n}(t,0)X_{(N)}^{u_n}(0,s)-X_{(N)}^{u_n}(t,0)X_{(N)}^{u_n}(0,s) \pi_2\\
&=&X_{(N)}^{u_n}\!(t,\!0)[\pi_ 2,\! X_{(N)}^{u_n}(0,s)]\!+\! [\pi_2,\! X_{(N)}^{u_n} (t,0)]X_{(N)}^{u_n}(0,\!s).
\end{eqnarray*}
Finally, we get, for every $(s,t)$ in $\mathbf{R}^2$, for every $n,N$ in $\mathbf{N}$.
\begin{equation}\label{EQ_maj_commutateurs}
 \left \|\left [ \pi_2, X_{(N)}^{u_n}(t,s) \right \rbrack \right \|\leq  4 \|E_{(N)}^{n,\alpha}(t)\|.
\end{equation}

From (\ref{EQ_post_bracket}) and (\ref{EQ_maj_commutateurs}), since $\|\pi_2B(1-\pi_N)\|<\varepsilon/K$,
\begin{eqnarray}
 \lefteqn{ \|\pi_2 \Upsilon^{\frac{u^{\ast}}{n}}(t)\pi_2  -\pi_2 X^{\frac{u^{\ast}}{n}}_{(N)}(t,0)\pi_2 \|} \nonumber\\ &\quad \quad \quad \leq&
\varepsilon  + 4 \|E_{(N)}^{n,\alpha}(t)\| K \|\pi_N B(1-\pi_N)\|.\label{EQ_majoration_generale}
\end{eqnarray}
From (\ref{EQ_estimation_conv_uniform_propaga_dim_finie}), $\sup_{t\leq v_n(K)} \|E^{n,\alpha}_{(N)}(t)\|$ tends to zero as $n$ tends to infinity.
For $n$ large enough, for every $\alpha$ in $[0,1]$ and $t\leq v_n(r)$, $$\|E_{(N)}^{n,\alpha}(t)\|\leq  \frac{\varepsilon}{4K \|\pi_N B(1-\pi_N)\|}.$$ 
 Proposition \ref{PRO_estimates_dim_finie} completes the proof of Proposition \ref{PRO_main_result} in the case where $(A,B,U,\Lambda,\Phi)$ satisfies Assumption \ref{ASS_AB_spectre_discret}.

\subsection{If $A$ has a mixed spectrum}\label{SEC_spectre_continu}
Assume that $(A,B,\lambda_1,\phi_1,\lambda_2,\phi_2)$ satisfy Assumption \ref{ASS_AB}.
From Theorem 2.1, page 525, of \cite{kato}, for every $\eta>0$, there exists a 
skew-adjoint operator $A_{\eta}$ such that $A_{\eta}$ admits a complete family of 
eigenvectors $(\Phi_\eta)$ associated with the family of eigenvalues
 $(\Lambda_\eta)$, $A \phi=A_{\eta}\phi$  for every eigenvector $\phi$ of $A$
 and $\|A-A_{\eta}\|<\eta$.

For every locally integrable $u$, we denote with $\Upsilon^{u}_\eta$ the propagator of $\frac{d}{dt}\psi=(A_{\eta}+uB_M)\psi$.

The scheme of the proof is a follows: the result is known (from Section 
\ref{SEC_spectre_discret}) for the system $(A_{\eta},B,\mathbf{R},\Lambda_\eta,
\Phi_\eta)$, 
which satisfies Assumption \ref{ASS_AB_spectre_discret}: we chose 
$u^\ast:t\mapsto \cos(\omega t -\theta)$ 
(this function is the ``shape'' of the control pulses, it does not depend on 
$\eta$ nor $\varepsilon$). For every $\eta,\varepsilon>0$, 
$\theta$ in $[-\pi,\pi]$ and $r>0$,  there exists an 
integer $n_\eta$ and a positive real $T_\eta$ satisfying, for every $\alpha$ in $[0,1]$, 
$$\|\Upsilon^{u^\ast/n,\alpha,\eta}_{T_\eta,0} - e^{r M^\theta_\alpha}\|<\varepsilon.$$
Notice that, for every $t$ in $\mathbf{R}$, 
$\|\Upsilon^{u^\ast/n,\alpha,\eta}_{t,0}-\Upsilon^{u^\ast/n,\alpha}_{t,0}\| \leq |t|\eta$.
The crucial point in the proof of Proposition \ref{PRO_main_result} for systems satisfying 
Assumption \ref{ASS_AB} is the existence of a uniform bound on $T_\eta$, that depends only 
on $r$ and $\varepsilon$, and not on $\eta$. This follows from 
(\ref{EQ_estimation_conv_uniform_propaga_dim_finie}), where the only variable 
depending on $\eta$ is 
$$C=\sup_{(j,k)\in \widehat{\Lambda}} \left | \frac{\int_0^T u^{\ast}(\tau) 
e^{\mathrm{i} (\lambda_j-\lambda_k)\tau}\mathrm{d}\tau}{\sin \left ( \pi\frac{|
\lambda_j-\lambda_k|}{|\lambda_2-\lambda_1|} \right )} \right |,$$
where $\widehat{\Lambda}$ is the set of all pairs $(j,k)$ in  $\{1,\ldots,N\}^2$  such that $b_{jk} 
\neq 0$ and $\{j,k\}\cap\{1,2\} \neq \emptyset$ and $ |\lambda_j-\lambda_k|\notin 
\mathbf{Z}|\lambda_2-\lambda_1|$.
(Notice that $\|B^{(N)}\|$ is bounded, for every $N$ by $\|B\|$.)

Straightforward computation gives $C \leq 2/d$ where $d$ is the distance of the set $\{2\lambda_1-\lambda_2,\lambda_1,\lambda_2,2\lambda_2-\lambda_1\}$ to the continuous part of the spectrum of $\mathrm{i}A$. This distance is not zero by Assumption 
\ref{ASS_AB}.\ref{ASS_sspectre_essentiel}, what concludes the proof of Proposition \ref{PRO_main_result}.


\addtolength{\textheight}{-0.8cm}

\section{EXAMPLE: ROTATION OF A MOLECULE}\label{SEC_Example}
\subsection{Modeling}
The description of the physical system we consider is a toy model inspired by the physical system described in \cite{6426289}. It has already been thoroughly studied, see for instance \cite{noiesugny-CDC}, \cite{Schrod2} or \cite{ACCFEPS}).
We consider a polar linear molecule in its ground vibronic state subject to a nonresonant (with respect to the vibronic frequencies) linearly polarized laser field. 
The control is given by the electric field $E=u(t)(E_1,E_2,E_3)$ depending on time and constant in space. We neglect in this model the polarizability tensor term which corresponds to the field-induced dipole moment (see for instance \cite{Morancey} or \cite{CDCquadratic}). 

Let $\mathcal{P}$ be a fixed plane in the space. We are interested in the orientation of 
the orthogonal projection of a set of molecules in the plane $\mathcal{P}$ (given by \emph{one} 
angle, in contrary to the orientation of the molecule in the space which is given by two 
angles). We neglect the interaction between molecules, and consider only the interaction between the molecules and the external field.
Our aim is to control the orientation of projection of the molecule in $\mathcal P$, 
whatever the angle of the molecule could be with $\cal P$. 

Up to normalization of physical constants (in particular, in units such that $\hbar=1$), 
the dynamics of each molecule is ruled by the equation
\begin{equation}
\mathrm{i}\frac{\partial\psi(\theta,t)}{\partial t}= -\Delta \psi + u_1(t) \cos 
\theta\sin \varphi \psi(\theta,\varphi,t)
\label{EQ_dyn_rotation}
\end{equation}
where $\theta$ is the angular coordinate in $\mathcal P$ and $\varphi$ is the angle of the 
molecule with $\mathcal P$, which is assumed to be constant for the sake of simplicity, 
$\Delta$ is the Laplace--Beltrami operator on the circle 
$\mathbf{S}=\mathbf{R}/2\pi\mathbf{Z}$, 
The wavefunction $\psi(\cdot,t)$ evolves in the unit sphere ${\cal S}$ of 
$H=L^2(\mathbf{S},\mathbf{C})$ endowed with scalar product $\langle 
f,g\rangle=\int_0^{2\pi}\bar{f}(s)g(s)\mathrm{d}s$.

The operator $A=\mathrm{i}\Delta$ is skew-adjoint in $H$, with domain $H^2(\mathbf{S},
\mathbf{C})$ and has discrete spectrum. Define $\phi_0:\theta \mapsto 1/\sqrt{2\pi}$ and, for every $k$ in 
$\mathbf{N}$, $\phi_{2k-1}:\theta\mapsto \cos(k \theta)/\sqrt{\pi}$ and $\phi_{2k}:\theta\mapsto \sin(k \theta)/\sqrt{\pi}$. 
The two functions $\phi_{2k+1}$ and $\phi_{2k}$ are eigenvectors of $A$, associated with 
eigenvalue $-\mathrm{i} k^2$.

The operator $B:\psi \mapsto -\mathrm{i}\cos(\theta) \psi $ is bounded. Straightforward
computations show that $|\langle \phi_0,B,\phi_1\rangle|=1/\sqrt{2}$ and 
$\langle \phi_j,\phi_k \rangle = 0$ if the parities of $j$ and $k$ are different 
or if $|j-k|>2$.

\subsection{Result}

Assume that a bunch of molecules is in the state $\phi_0$ at $t=0$. We aim to transfer 
to the state $\phi_1$ all the molecules for which $\varphi>\pi/3$ and to keep all the 
molecules for which $\varphi<\pi/6$ in the state $\phi_0$. 

From Proposition \ref{PRO_main_result}, applied to $(A,B,0,\phi_0,1,\phi_2)$ which satisfies Assumption \ref{ASS_AB}, this is possible, up to the phase and to an arbitrary small error $\varepsilon>0$.

\section{CONCLUSIONS}

\subsection{Some comments on the result}
While our construction is completely explicit (simple formulas are available for the control laws 
and come along with precision and time estimates), the convergence toward the target is 
extremely slow and cannot be used for actual control of real systems. This well-known fact 
is due to the very poor efficiency of tracking strategies via Lie brackets. 

\subsection{Perspectives}
The presented results may certainly be improved in many ways. For instance, the author 
conjectures that it is possible to replace $\hat{\Upsilon}$ in 
Proposition \ref{PRO_main_result} by a unitary 
transformation of $\mathcal{L}_N$ with $N>2$ or to extend the result to systems for which the free Hamiltonian $A$ has a mixed spectrum and the coupling Hamiltonian $B$ is unbounded. 


\section{ACKNOWLEDGMENTS}

This work has been partially supported by INRIA Nancy-Grand Est, by
French Agence National de la Recherche ANR ``GCM'' program
``BLANC-CSD'', contract number NT09-504590 and by European Research
Council ERC StG 2009 ``Ge\-Co\-Methods'', contract number
239\-748.

\bibliographystyle{IEEEtran}
\bibliography{biblio}

\end{document}